\newtheorem{theorem}{Theorem}[section]
\theoremstyle{plain}
\newtheorem{conjecture}{Conjecture}[section]
\newtheorem{definition}{Definition}[section]
\newtheorem{lemma}{Lemma}[section]
\newtheorem{proposition}{Proposition}[section]
\newtheorem{observation}{Observation}[section]
\numberwithin{equation}{section}
\def\func#1{\mathop{\rm #1}\nolimits}
\begin{document}
\title[N. RANGE, DYNAMICS OF A RATIONAL FUNCTION]{NUMERICAL RANGE AND THE
DYNAMICS OF A RATIONAL FUNCTION}
\author{Helena Melo}
\address{Department of Mathematics, Universidade dos A\c{c}ores, Campus de
Ponta Delgada, 9501-801, Ponta Delgada, Portugal}
\email{hmelo@uac.pt}
\author{Jo\~{a}o Cabral}
\address{Department of Mathematics, Universidade dos A\c{c}ores, Campus de
Ponta Delgada, 9501-801, Ponta Delgada, Portugal}
\email{jcabral@uac.pt}

\begin{abstract}
Sometimes we obtain attractive results when associating facts to simple
elements. The goal of this work is to introduce a possible alternative in
the study of the dynamics of rational maps. In this study we use the family
of maps $f(x)=\frac{x^{2}-a}{x^{2}-b}$, making some associations with the
matrix $A=\left(
\begin{array}{cc}
1 & -a \\
1 & -b%
\end{array}%
\right) $ of its coefficients.
\end{abstract}

\maketitle

\section{Introduction\label{seccao 1}}

The main goal of this article is to present an alternative tool to study the
dynamics of a real rational function, using results from the Numerical Range
Theory, and has two main parts. The first, comprising Sections 2-3, is
concerned to the adequation of the Numerical Range Theory to the data
provided from rational maps. As described in Milnor\cite{Milnor 1993}, each
map $f$, in the space \textit{Rat}$_{2}$, can be expressed as a ratio
\begin{equation*}
f(z)=\frac{p(z)}{q(z)}=\frac{a_{0}z^{2}+a_{1}z+a_{2}}{b_{0}z^{2}+b_{1}z+b_{2}%
},
\end{equation*}%
where $a_{0}$ and $b_{0}$ are not both zero and $p(z)$, $q(z)$ have no
common root. Milnor\cite{Milnor 1993} states that we can obtain a roughly
description of the topology of this space \textit{Rat}$_{2}$ that can be
identified with the Zariski open subset of \textit{complex projective 5-space%
} consisting of all points
\begin{equation*}
\left( a_{0}:a_{1}:a_{2}:b_{0}:b_{1}:b_{2}\right) \in CP^{5},
\end{equation*}%
for which the \textit{resultant}
\begin{equation*}
\text{\textit{res}}(p,q)=\det \left(
\begin{array}{cccc}
a_{0} & a_{1} & a_{2} & 0 \\
0 & a_{0} & a_{1} & a_{2} \\
b_{0} & b_{1} & b_{2} & 0 \\
0 & b_{0} & b_{1} & b_{2}%
\end{array}%
\right)
\end{equation*}%
is non-zero. Taking $z=x+i0$ and $a_{0}=1$, $b_{0}=1$, $a_{2}=-a+i0$, $%
b_{2}=-b+i0$, $a_{1}=0$, $b_{1}=0$, with $x,$ $a,$ $b$ real numbers, we
obtain
\begin{equation*}
B=\left(
\begin{array}{cccc}
1 & 0 & -a & 0 \\
0 & 1 & 0 & -a \\
1 & 0 & -b & 0 \\
0 & 1 & 0 & -b%
\end{array}%
\right) \text{,}
\end{equation*}%
\textit{res}$(p,q)=\det $ $B$. This matrix $B$ is associated to the real
rational map $f(x)=\left( x^{2}-a\right) /\left( x^{2}-b\right) $. The map $%
f $ will be the one that we will use in our results associated to the matrix
$B $.

The second part of this article, comprising Sections 4-5, shows how can we
apply the Numerical Range Theory to the dynamics of the map $f$,
establishing the relation between some partitions of an ellipse $\Omega $,
and the symbolic space generated by the partition of the domain of $f$ in
real intervals. Moreover, we launch a conjecture that could be a path to
generate, in the future, an extension of the usual symbolic space applied to
rational maps, allowing us to describe much better the dynamics of this maps.

\section{Numerical Range Theory}

The classical numerical range of a square matrix $M_{n}$, with complex
numbers elements, is the set $W(M_{n})=\left\{ u^{\ast }M_{n}u,\text{ }u\in
S(\mathbb{C}^{n})\right\} $ , with $S(\mathbb{C}^{n})$ the unit sphere, $u$
is a vector in $\mathbb{C}^{n}$ and $u^{\ast }$ is the transpose conjugate
of $u$. The numerical range $W(M_{n})$ also can be defined as the the image
of the \textit{Rayleight quotient }$R_{M_{n}}(u)=u^{\ast }M_{n}u/u^{\ast }u$%
, $u\neq 0$. The set $W(M_{n})$ is closed and limited, and it is also a
subset of the Gaussian $\mathbb{C}$ plane. Toeplitz\cite{Toeplitz 1918} and
Hausdorff\cite{Hausdorff 1919} proved that $W(M_{n})$ is a convex region.
From Kippenhahn\cite{Kippenhahn 1951}, the boundary of the numerical range, $%
\partial W(M_{n})$ is a piecewise algebraic curve. In the particular case of
a square matrix $M_{2}$, with eigenvalues $\lambda _{1}$, $\lambda _{2}$, $%
W(M_{2})$ is a subset limited by an ellipse with foci in $\lambda _{1}$ and $%
\lambda _{2}$, result known as Elliptical Range Theorem, see Li\cite{Li 1996}%
.

\begin{proposition}
If $H_{M_{n}}=\left( M_{n}+M_{n}^{\ast }\right) /2$ and $S_{M_{n}}=\left(
M_{n}-M_{n}^{\ast }\right) /2$ are the Hermitian and skew-Hermitian parts of
$M_{n}$, respectively, then $\func{Re}(W(M_{n}))=W(H_{M_{n}})$ and $\func{Im}%
(W(M_{n}))=W(S_{M_{n}})$.
\end{proposition}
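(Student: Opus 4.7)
The plan is to exploit the Cartesian-type decomposition $M_{n}=H_{M_{n}}+S_{M_{n}}$, which holds identically by the very definitions of the two pieces, and then push it through the sesquilinear form $u\mapsto u^{\ast}M_{n}u$ that defines the numerical range. Since the map $u\mapsto u^{\ast}(\cdot)u$ is linear in the matrix argument, for every $u\in S(\mathbb{C}^{n})$ we have $u^{\ast}M_{n}u=u^{\ast}H_{M_{n}}u+u^{\ast}S_{M_{n}}u$, and the task reduces to showing that these two summands are the real and the imaginary parts of $u^{\ast}M_{n}u$.

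First I would verify, by a one-line computation of adjoints, that $H_{M_{n}}^{\ast}=H_{M_{n}}$ and $S_{M_{n}}^{\ast}=-S_{M_{n}}$; this uses only $(AB)^{\ast}=B^{\ast}A^{\ast}$ and $(M_{n}^{\ast})^{\ast}=M_{n}$. Then I would invoke (or quickly reprove) the standard fact that the quadratic form of a Hermitian matrix is real-valued and that of a skew-Hermitian matrix is purely imaginary. Concretely, $\overline{u^{\ast}H_{M_{n}}u}=(u^{\ast}H_{M_{n}}u)^{\ast}=u^{\ast}H_{M_{n}}^{\ast}u=u^{\ast}H_{M_{n}}u$, so $u^{\ast}H_{M_{n}}u\in\mathbb{R}$; the analogous calculation with a minus sign gives $u^{\ast}S_{M_{n}}u\in i\mathbb{R}$.

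Combining these two facts with the decomposition yields, for each unit vector $u$,
\begin{equation*}
\func{Re}(u^{\ast}M_{n}u)=u^{\ast}H_{M_{n}}u,\qquad \func{Im}(u^{\ast}M_{n}u)\,i=u^{\ast}S_{M_{n}}u.
\end{equation*}
Letting $u$ range over $S(\mathbb{C}^{n})$, the left-hand sides sweep out $\func{Re}(W(M_{n}))$ and (up to the factor $i$) $\func{Im}(W(M_{n}))$, while the right-hand sides sweep out $W(H_{M_{n}})$ and $W(S_{M_{n}})$ respectively. This produces both claimed equalities of sets.

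There is essentially no hard step; the main thing to be careful about is the mild abuse of notation in the statement $\func{Im}(W(M_{n}))=W(S_{M_{n}})$, since the left side is a set of real numbers while $W(S_{M_{n}})$ lies on the imaginary axis. I would either note this identification explicitly (via the canonical bijection $t\leftrightarrow it$ between $\mathbb{R}$ and $i\mathbb{R}$) or, equivalently, rephrase the second equality as $i\,\func{Im}(W(M_{n}))=W(S_{M_{n}})$. Once that convention is fixed, the proof is immediate from the decomposition and the adjoint computation, and no convexity or spectral information about $M_{n}$ is needed.
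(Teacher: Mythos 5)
Your proof is correct. Note that the paper itself gives no argument for this proposition at all; it simply refers the reader to the second author's thesis (Melo, 1999), so there is nothing in the text to compare your reasoning against step by step. What you supply is the standard, self-contained argument: the identity $M_{n}=H_{M_{n}}+S_{M_{n}}$, linearity of $u\mapsto u^{\ast}(\cdot)u$ in the matrix argument, and the adjoint computations showing $u^{\ast}H_{M_{n}}u\in\mathbb{R}$ and $u^{\ast}S_{M_{n}}u\in i\mathbb{R}$, which together force $\func{Re}(u^{\ast}M_{n}u)=u^{\ast}H_{M_{n}}u$ and $i\,\func{Im}(u^{\ast}M_{n}u)=u^{\ast}S_{M_{n}}u$ for each unit vector $u$; the set equalities follow because both sides are parametrized by the same $u$ ranging over $S(\mathbb{C}^{n})$. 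You are also right to flag the notational point that $W(S_{M_{n}})$ sits on the imaginary axis while $\func{Im}(W(M_{n}))$ is a set of reals, so the second equality must be read through the identification $t\leftrightarrow it$ (this is the usual convention in the numerical range literature, and the paper tacitly adopts it). Your argument is complete, elementary, and uses no convexity or spectral input, which is all that is needed here.
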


\begin{proof}
The proof can be found in Melo\cite{Melo 1999}.
\end{proof}

\begin{theorem}
\label{Teorema convex hull}To every complex matrix $%
M_{n}=H_{M_{n}}+S_{M_{n}} $ through the equation $k_{M_{n}}(\alpha
_{1},\alpha _{2},\alpha _{3})\equiv \det (\alpha _{1}H_{M_{n}}-i\alpha
_{2}S_{M_{n}}+\alpha _{3}I_{n})=0$ is associated a curve of class $n$ in
homogeneous line coordinates in the complex plane. The convex hull of this
curve is the numerical range of the matrix $M_{n}$.
\end{theorem}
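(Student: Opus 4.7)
The plan is to combine the convexity of the numerical range with an explicit description of its supporting lines. By the Toeplitz--Hausdorff theorem $W(M_n)$ is compact and convex, hence equals the intersection of all closed half-planes containing it; equivalently, it is the convex hull of the envelope of its family of supporting lines. It therefore suffices to show that the line coordinates of every supporting line of $W(M_n)$ annihilate the polynomial $k_{M_n}$, and that $k_{M_n}=0$ cuts out an algebraic curve of class $n$ in the dual (line) coordinates.

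To identify the supporting lines, I would rotate and project. For each $\theta\in[0,2\pi)$ the right-most point of $W(e^{-i\theta}M_n)$ in the direction of the real axis corresponds to a supporting line of $W(M_n)$ with outward normal $e^{i\theta}$. By the preceding proposition, the projection of $W(e^{-i\theta}M_n)$ onto the real axis is $W(H_{e^{-i\theta}M_n})$, and its right endpoint is the largest eigenvalue of $H_{e^{-i\theta}M_n}$. A short direct computation using $M_n=H_{M_n}+S_{M_n}$, $H_{M_n}^{\ast}=H_{M_n}$ and $S_{M_n}^{\ast}=-S_{M_n}$ gives
\begin{equation*}
H_{e^{-i\theta}M_n}=\cos\theta\, H_{M_n}-i\sin\theta\, S_{M_n},
\end{equation*}
so that the supporting line with unit normal $e^{i\theta}$ lies at signed distance $t$ from the origin precisely when $\det(\cos\theta\, H_{M_n}-i\sin\theta\, S_{M_n}-tI_n)=0$.

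Introducing the homogeneous line coordinates $(\alpha_1:\alpha_2:\alpha_3)$ that associate to a line $\alpha_1 u+\alpha_2 v+\alpha_3=0$ in the $z=u+iv$ plane the triple $(\cos\theta,\sin\theta,-t)$, the eigenvalue equation above becomes exactly $k_{M_n}(\alpha_1,\alpha_2,\alpha_3)=0$. Since $k_{M_n}$ is a homogeneous polynomial of degree $n$ in $(\alpha_1,\alpha_2,\alpha_3)$, its vanishing locus is by definition a curve of class $n$ in line coordinates. Taking the union over all $\theta$ of the outermost tangent lines so obtained and forming the convex hull then recovers $W(M_n)$ by the envelope characterization of convex bodies.

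The main obstacle is reconciling the full algebraic curve $k_{M_n}=0$ with the geometric supporting lines: for a fixed $\theta$ the equation $\det(\cos\theta\, H_{M_n}-i\sin\theta\, S_{M_n}-tI_n)=0$ has $n$ real roots, only the extreme ones of which are truly supporting lines of $W(M_n)$; the intermediate roots produce tangent lines to the algebraic curve that lie \emph{inside} the numerical range. I expect the cleanest resolution to be exactly the \emph{convex hull} statement in the theorem: the non-supporting tangents are absorbed by the convex hull operation, so no additional argument beyond the Toeplitz--Hausdorff convexity is needed to discard them, while the eigenvalue characterization of supporting lines guarantees that the convex hull is no smaller than $W(M_n)$.
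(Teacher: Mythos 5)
Your argument is precisely the classical supporting-line proof of Kippenhahn's theorem, which is exactly what the paper's proof defers to by citation: the computation $H_{e^{-i\theta}M_n}=\cos\theta\,H_{M_n}-i\sin\theta\,S_{M_n}$, the identification of the supporting line with normal $e^{i\theta}$ via the extreme eigenvalue of this pencil, and the reading of $\det(\alpha_1 H_{M_n}-i\alpha_2 S_{M_n}+\alpha_3 I_n)=0$ as a class-$n$ curve in line coordinates are all correct. Your closing remark that the intermediate eigenvalues yield interior tangent lines absorbed by the convex hull is the right resolution of the only delicate point, so the proposal is sound and essentially coincides with the proof the paper invokes.
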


\begin{proof}
Adapting the proof in Kippenhahn\cite{Kippenhahn 1951} we have the desired
result.
\end{proof}

\section{Merging $f(x)$ in $W(M_{n})$}

Hwa-Long Gau\cite{Gau 2006} states that we can obtain from a $4\times 4$
matrix an elliptical numerical range, thus we could use $B$ as defined in
section \ref{seccao 1}, but we can simplify our results if we use a smaller
matrix, $A_{2}$, trough a result in linear algebra. The new matrix $%
A_{2}=\left(
\begin{array}{cc}
1 & -a \\
1 & -b%
\end{array}%
\right) $ will produce equivalent results as the obtained from $B$.

\begin{lemma}
\label{Decomposicao unitaria}The matrix $B\ $is unitary decomposable in
\begin{equation*}
\left(
\begin{array}{cc}
A_{2} & \mathbb{O}_{2} \\
\mathbb{O}_{2} & A_{2}%
\end{array}%
\right)
\end{equation*}%
a block diagonal matrix.
\end{lemma}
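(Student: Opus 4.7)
The plan is to exhibit an explicit unitary matrix $U$ that conjugates $B$ into the required block diagonal form. Since permutation matrices are real orthogonal and hence unitary, it suffices to find a permutation of the standard basis that groups the coordinates on which $B$ acts independently.

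First I would observe that, reading off the columns of $B$, the image of $e_1$ lies in $\operatorname{span}\{e_1,e_3\}$ (namely $Be_1=e_1+e_3$), and $Be_3 = -a\,e_1 - b\,e_3$ also lies in $\operatorname{span}\{e_1,e_3\}$. Similarly, $Be_2=e_2+e_4$ and $Be_4=-a\,e_2-b\,e_4$ both lie in $\operatorname{span}\{e_2,e_4\}$. Hence $\mathbb{C}^4$ splits as the $B$-invariant orthogonal direct sum $\operatorname{span}\{e_1,e_3\}\oplus\operatorname{span}\{e_2,e_4\}$, and in each of the ordered bases $(e_1,e_3)$ and $(e_2,e_4)$ the restriction of $B$ has matrix exactly $A_2$.

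Next I would write down the permutation unitary that realizes this reordering, namely the matrix $U$ that sends the standard ordered basis $(e_1,e_2,e_3,e_4)$ to $(e_1,e_3,e_2,e_4)$. Explicitly,
\begin{equation*}
U=\begin{pmatrix} 1 & 0 & 0 & 0 \\ 0 & 0 & 1 & 0 \\ 0 & 1 & 0 & 0 \\ 0 & 0 & 0 & 1 \end{pmatrix},
\end{equation*}
which satisfies $U^{\ast}U=I_{4}$. A direct computation of $U^{\ast}BU$ then produces the claimed block diagonal matrix $\operatorname{diag}(A_{2},A_{2})$.

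There is no genuine obstacle here; the content is just to notice the parity decoupling of the coordinates of $B$ and to record the corresponding permutation. The only thing worth being careful about is checking that $U^{\ast}BU$ (and not $UBU^{\ast}$) is the correct conjugation, i.e.\ that rows and columns are reindexed consistently so that the upper-left $2\times2$ block really is $A_2$ rather than its transpose; this is confirmed by evaluating both sides on the permuted basis vectors.
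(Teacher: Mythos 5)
Your proposal is correct and is essentially the paper's own proof: the permutation matrix $U$ you exhibit is exactly the unitary $E$ used there, and the verification $U^{\ast}BU=A_{2}\oplus A_{2}$ is the same computation. The only difference is that you additionally explain \emph{why} this permutation works (the $B$-invariant splitting $\operatorname{span}\{e_1,e_3\}\oplus\operatorname{span}\{e_2,e_4\}$), which the paper leaves implicit.
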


\begin{proof}
In order to prove this result it is sufficient to find an unitary matrix $E$
such that%
\begin{equation*}
E^{\ast }BE=\left(
\begin{array}{cc}
A_{2} & \mathbb{O}_{2} \\
\mathbb{O}_{2} & A_{2}%
\end{array}%
\right) \text{.}
\end{equation*}%
With some computation we can see that
\begin{equation*}
E=\left(
\begin{array}{cccc}
1 & 0 & 0 & 0 \\
0 & 0 & 1 & 0 \\
0 & 1 & 0 & 0 \\
0 & 0 & 0 & 1%
\end{array}%
\right) .
\end{equation*}
\end{proof}

\begin{proposition}
$W(A_{2})=W(B)$
\end{proposition}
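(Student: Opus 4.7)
The plan is to combine Lemma \ref{Decomposicao unitaria} with two standard facts about the numerical range: invariance under unitary similarity, and the behavior of $W(\cdot)$ on block-diagonal matrices.

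First I would invoke Lemma \ref{Decomposicao unitaria} to write $E^{\ast}BE = \func{diag}(A_2, A_2)$ for the explicit unitary $E$ given there. Because $E$ is unitary, the map $u \mapsto Eu$ is a bijection of $S(\mathbb{C}^4)$ onto itself, so from the definition of $W$ one immediately gets
\begin{equation*}
W(B) \;=\; \bigl\{\, u^{\ast} B u : u \in S(\mathbb{C}^4)\,\bigr\} \;=\; \bigl\{\, v^{\ast}(E^{\ast}BE) v : v \in S(\mathbb{C}^4)\,\bigr\} \;=\; W\bigl(\func{diag}(A_2,A_2)\bigr).
\end{equation*}

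Next I would reduce the numerical range of the block-diagonal matrix to that of its diagonal block. Writing $v = (v_1, v_2)^{T}$ with $v_1, v_2 \in \mathbb{C}^2$ and $\|v_1\|^2 + \|v_2\|^2 = 1$, a direct computation gives
\begin{equation*}
v^{\ast} \func{diag}(A_2, A_2)\, v \;=\; v_1^{\ast} A_2 v_1 + v_2^{\ast} A_2 v_2 \;=\; \|v_1\|^2 \cdot \tfrac{v_1^{\ast} A_2 v_1}{\|v_1\|^2} + \|v_2\|^2 \cdot \tfrac{v_2^{\ast} A_2 v_2}{\|v_2\|^2},
\end{equation*}
with the obvious interpretation when one of the $v_i$ vanishes. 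This expresses every element of $W(\func{diag}(A_2,A_2))$ as a convex combination of two elements of $W(A_2)$, so
\begin{equation*}
W\bigl(\func{diag}(A_2,A_2)\bigr) \;\subseteq\; \func{conv}\bigl(W(A_2) \cup W(A_2)\bigr) \;=\; \func{conv}\, W(A_2).
\end{equation*}
The reverse inclusion $W(A_2) \subseteq W(\func{diag}(A_2,A_2))$ is obtained by padding unit vectors of $\mathbb{C}^2$ with zeros in the last two coordinates.

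Finally, since $A_2$ is a $2\times 2$ matrix, the Toeplitz--Hausdorff theorem (cited in Section 2) ensures that $W(A_2)$ is already convex, so $\func{conv}\, W(A_2) = W(A_2)$. Chaining the identifications yields $W(B) = W(\func{diag}(A_2,A_2)) = W(A_2)$, as desired. There is no genuine obstacle here; the only point that requires care is the separate treatment of the case $v_1 = 0$ or $v_2 = 0$ in the convex-combination argument, which is a routine boundary check rather than a conceptual difficulty.
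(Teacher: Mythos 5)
Your proof is correct and follows essentially the same route as the paper: apply Lemma \ref{Decomposicao unitaria}, use unitary invariance of the numerical range, identify $W(A_2\oplus A_2)$ with the convex hull of $W(A_2)\cup W(A_2)$, and conclude by convexity of $W(A_2)$. The only difference is that you prove the two standard facts (unitary invariance and the direct-sum/convex-hull formula) from the definition, whereas the paper simply cites them.
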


\begin{proof}
By lemma \ref{Decomposicao unitaria} $E^{\ast }BE=A_{2}\oplus A_{2}$ and
using the properties of numerical range we have
\begin{equation*}
W(E^{\ast }BE)=\text{\textit{convex hull}}\{W(A_{2})\cup W(A_{2})\}=\text{%
\textit{convex hull}}\{W(A_{2})\}.
\end{equation*}%
The \textit{convex hull} of a convex set is itself, then $W(E^{\ast
}BE)=W(A_{2})$. But the numerical range of $B$ is invariant under unitary
transformations, see Kippenhahn\cite{Kippenhahn 1951}, so $W(B)=W(A_{2})$.
\end{proof}

In our study we use $f(x)=(x^{2}-a)/(x^{2}-b)$, with $a>0$, $b>0$ and $a>b$.
Such map takes all real axis with exceptions $\pm \sqrt{b}$ on $(-\infty
,1)\cup \lbrack \frac{a}{b},+\infty )$. With $\Lambda =\mathbb{R}\backslash
\{\pm \sqrt{b}\}\times $ $(-\infty ,1)\cup \lbrack \frac{a}{b},+\infty )$ we
can define the graphic of $f$, $graph(f)$, as the pair $(x$, $f(x))\in
\Lambda $ and $\theta :\mathbb{R}\backslash \{\pm \sqrt{b}\}\longrightarrow
\Lambda $.

\begin{definition}
\label{Definicao funcoes}Let $C=\left\{ v\in \mathbb{C}^{2}:v=(x\text{, }%
if(x)\right\} $ and
\begin{equation*}
\Psi =\left\{ z\in \mathbb{C}:z=\frac{v^{\ast }A_{2}v}{v^{\ast }v},v\neq
0\right\} .
\end{equation*}%
We define $V:\Lambda \longrightarrow C$ as $(x$, $f(x))\longmapsto (x$, $%
if(x))$ and $\Xi :C\longrightarrow $ $\Psi $.
\end{definition}

By definition \ref{Definicao funcoes} the image of $(x$, $f(x))$ is $%
z=\left( v^{\ast }A_{2}v\right) /v^{\ast }v$ by $\Xi \circ V$.

\begin{proposition}
\label{criacao de psi em wa2}$\Psi $ $\subset W(A_{2})$
\end{proposition}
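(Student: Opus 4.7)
The plan is to observe that $\Psi$ and $W(A_{2})$ are both images of the Rayleigh quotient $v \mapsto v^{\ast}A_{2}v/v^{\ast}v$, but with different domains: $W(A_{2})$ uses every nonzero $v \in \mathbb{C}^{2}$, whereas $\Psi$ only uses vectors drawn from the one-parameter family $C = \{(x, if(x)) : x \in \mathbb{R}\setminus\{\pm\sqrt{b}\}\}$. Once one notices this, the containment is a direct consequence of the definitions, so the entire proof reduces to checking that $C$ supplies genuinely nonzero vectors and then quoting the Rayleigh-quotient characterization of $W(A_{2})$ recalled in Section 2.

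First I would unpack the definitions: by Definition \ref{Definicao funcoes}, any $z \in \Psi$ arises as $z = \Xi(V(x,f(x))) = v^{\ast}A_{2}v/v^{\ast}v$ with $v = (x, if(x))$ for some $x \in \mathbb{R}\setminus\{\pm\sqrt{b}\}$. Next I would verify that such a $v$ is indeed nonzero, which is the only thing that needs a short argument: if $x \neq 0$ it is immediate, and if $x = 0$ then the second coordinate is $if(0) = ia/b$, which is nonzero since the standing hypotheses give $a > 0$ and $b > 0$.

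With $v \neq 0$, I would then set $u = v/\lVert v\rVert$, so that $u \in S(\mathbb{C}^{2})$ and
\begin{equation*}
z = \frac{v^{\ast}A_{2}v}{v^{\ast}v} = u^{\ast}A_{2}u.
\end{equation*}
By the very definition of the classical numerical range recalled in Section 2, this puts $z$ in $W(A_{2})$. Since $z \in \Psi$ was arbitrary, the inclusion $\Psi \subset W(A_{2})$ follows.

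There is essentially no obstacle; the only subtlety is to be careful about the nonvanishing of $v$ at $x = 0$, which uses the positivity of $a$ and $b$. The inclusion is strict in general because $\Psi$ is the image of a one-real-parameter curve, while $W(A_{2})$ is a full two-dimensional elliptical region (by the Elliptical Range Theorem applied to the $2\times 2$ matrix $A_{2}$), but equality is not claimed here.
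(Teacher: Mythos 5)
Your proof is correct and follows exactly the paper's (one-line) argument: both $\Psi$ and $W(A_{2})$ are images of the Rayleigh quotient of $A_{2}$, with $\Psi$ merely restricting the domain to the vectors $(x,if(x))$. Your additional check that these vectors are nonzero (using $f(0)=a/b\neq 0$) is a welcome detail the paper leaves implicit, but it does not change the approach.
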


\begin{proof}
By the definition of $\Psi $ and $W(A_{2})$, using \textit{Rayleight quotient%
}, the result follows.
\end{proof}

From proposition \ref{criacao de psi em wa2} we know that $\Psi $ is a
subset of $W(A_{2})\subset \mathbb{C}$, and using definition \ref{Definicao
funcoes}, we can calculate the elements $z\in \Psi $, and as they were
defined, they will become a function of $x$. After some calculations we have
\begin{equation*}
z(x)=\frac{-a^{2}b+\left( 2a+b\right) bx^{2}-3bx^{4}+x^{6}+i\left(
a+1\right) \left( -abx+(b+a)x^{3}-x^{5}\right) }{a^{2}+\left(
b^{2}-2a\right) x^{2}+\left( 1-2b\right) x^{4}+x^{6}}.
\end{equation*}%
So, the $z\in \Psi $ is a function such that $z(x)=g(x)+ih(x)$, with $x\in
\mathbb{R}$. Some elementary calculus show us that $g(x)$ and $h(x)$ are
real rational continuous functions in $\mathbb{R}$,$\ $therefore $z(x)$ is
continuous in $\mathbb{C}$. We call some attention to the fact that $z(\pm
\sqrt{b})$ and $z(\infty )$ exists and are well defined in $\mathbb{C}$.

\begin{observation}
We have $z(x+1)=z(x)$ for
\begin{equation*}
x=\frac{1}{2}\left( -1\pm \sqrt{1+6a-2b\pm 2\sqrt{4a+9a^{2}-10ab+b^{2}}}%
\right) \text{.}
\end{equation*}
\end{observation}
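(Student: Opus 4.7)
My plan is to turn the identity $z(x+1) = z(x)$ into a polynomial equation in $x$ and then reduce it by a substitution suggested by the very shape of the claimed roots. Writing $z = N/D$, with $N(x)$ the complex numerator and $D(x)$ the real denominator displayed in the text, the equation is equivalent to
\[
N(x+1)\,D(x) - N(x)\,D(x+1) = 0
\]
at any $x$ where $D(x)D(x+1)\neq 0$. Separating real and imaginary parts gives two real polynomial equations, and I would verify by direct expansion that both are divisible by a common polynomial in $x$ carrying the nontrivial solutions.

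Next, I would change variable to $y = x^{2}+x$. This is motivated by rewriting the claimed roots as $(2x+1)^{2} = 1+6a-2b \pm 2\sqrt{9a^{2}-10ab+b^{2}+4a}$, so that $4y = (2x+1)^{2}-1$ removes the outer square root. A short computation then shows that the claimed values of $x$ are exactly those for which $y$ satisfies the quadratic
\[
y^{2} + (b-3a)\,y + a(b-1) = 0,
\]
since its discriminant is $(b-3a)^{2}-4a(b-1) = 9a^{2}-10ab+b^{2}+4a$; one recovers $x$ from $x^{2}+x = y$ by the usual formula.

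It therefore suffices to show that, after clearing the trivial factors coming from $D(x)D(x+1)$, the polynomial $N(x+1)D(x)-N(x)D(x+1)$ reduces, when expressed in terms of $y = x^{2}+x$, to a nonzero scalar multiple of $y^{2}+(b-3a)y+a(b-1)$. The main obstacle is the bookkeeping: both $N$ and $D$ have degree six in $x$, so the raw difference has degree up to eleven, and one must identify and discard the spurious factors, exploiting the even/odd symmetries $g(-x)=g(x)$, $h(-x)=-h(x)$ of the real and imaginary parts, before the remainder collapses to a polynomial in $y$ alone. Once this collapse is verified, applying the quadratic formula first to $y$ and then to $x^{2}+x=y$ recovers precisely the expression stated in the observation.
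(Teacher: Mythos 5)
The paper states this observation without any proof, so there is nothing to compare your argument to; it must stand on its own. Your strategy is sound and your preparatory algebra is correct: writing $z=N/D$ and clearing denominators to $N(x+1)D(x)-N(x)D(x+1)=0$ is the right normal form, and the substitution $y=x^{2}+x$ is exactly the right move --- the displayed roots are precisely the solutions of $y^{2}+(b-3a)y+a(b-1)=0$, whose discriminant is indeed $(b-3a)^{2}-4a(b-1)=9a^{2}-10ab+b^{2}+4a$. One remark on your justification of that substitution: the symmetries you cite, $g(-x)=g(x)$ and $h(-x)=-h(x)$, give $z(-u)=\overline{z(u)}$, so the equation $z(x+1)=z(x)$ is invariant under the involution $x\mapsto -1-x$ (which exchanges the pair $\{x,\,x+1\}$ with $\{-x-1,\,-x\}$), not under $x\mapsto -x$; it is the invariant $y=x^{2}+x$ of \emph{that} involution which forces the relevant factor to be a polynomial in $y$. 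Stating this precisely would strengthen the paragraph where you speak loosely of "exploiting the even/odd symmetries."

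The genuine gap is that the decisive step is announced but never performed. The whole content of the observation is the claim that the quartic $(x^{2}+x)^{2}+(b-3a)(x^{2}+x)+a(b-1)$ divides both real polynomials $G(x+1)D(x)-G(x)D(x+1)$ (degree $\le 11$) and $H(x+1)D(x)-H(x)D(x+1)$ (degree $\le 10$), where $N=G+iH$; you write "once this collapse is verified" and stop, so no part of the actual identity has been checked, and nothing you have written excludes the possibility that the collapse fails or holds only under extra conditions on $a$ and $b$. To close the argument you must either exhibit the two quotient polynomials explicitly or carry out the polynomial division (a finite, if tedious, computation; a numerical spot check such as $a=2$, $b=1$, where the quadratic becomes $y(y-5)=0$ and one confirms $z(0)=z(1)=-1$, is reassuring but not a proof). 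Note also that you aim at more than is needed: the observation only asserts that the displayed values are solutions, so divisibility of both brackets by the quartic suffices; identifying the full gcd after stripping the "spurious factors" is extra work that the statement does not require.
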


Let $x_{2}$ and $x_{12}$ be the values where $f(x)=0$. If we calculate $%
z(0,a/b)$; $z(x_{2},0)$; $z(x_{12},0)$; $z(x,x)$; $z(x,-x)$ we obtain four
different points of $\Psi $. With some elementary algebra we calculated the
ellipse that contain this four points. This ellipse is
\begin{equation*}
\Omega =\left\{ \frac{\left( x-\frac{1-b}{2}\right) ^{2}}{\left( \frac{1+b}{2%
}\right) ^{2}}+\frac{y^{2}}{\left( \frac{1+a}{2}\right) ^{2}}=1\text{, }(x%
\text{, }y)\in \mathbb{R}^{2}\text{, }a>b,a>0,b>0\right\} \text{,}
\end{equation*}%
with $1+b$ and $1+a$ the minor and major axis length of $\Omega $,
respectively.

Moreover, when we use all points $(x,$ $f(x))$ they will fall in $\Omega $
under transformation by $z$.

\begin{lemma}
\label{lema de z a formar omega}Let $z(x)=g(x)+ih(x)$, then the pair $(\func{%
Re}(z),\func{Im}(z))$ satisfies $\Omega .$
\end{lemma}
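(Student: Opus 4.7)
The plan is to avoid expanding the rational form of $z(x)$ and instead treat $f=f(x)$ as a formal real variable; the geometry of $\Omega$ then falls out of the standard rational parametrisation of the unit circle. First I would compute $v^{\ast}A_{2}v$ and $v^{\ast}v$ directly from $v=(x,if(x))$, obtaining
$$v^{\ast}A_{2}v = x^{2} - b f^{2} - i(a+1)xf, \qquad v^{\ast}v = x^{2} + f^{2},$$
so that
$$g(x) = \frac{x^{2} - bf^{2}}{x^{2} + f^{2}}, \qquad h(x) = \frac{-(a+1)xf}{x^{2}+f^{2}}.$$

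Next I would translate to coordinates that put $\Omega$ in standard form. With centre $c=(1-b)/2$ and semi-axes $p=(1+b)/2$, $q=(1+a)/2$, a short algebraic simplification (placing everything over the common denominator $2(x^{2}+f^{2})$) yields
$$\frac{g(x)-c}{p}=\frac{x^{2}-f^{2}}{x^{2}+f^{2}},\qquad \frac{h(x)}{q}=\frac{-2xf}{x^{2}+f^{2}}.$$
The equation of $\Omega$ then reduces to the Pythagorean identity $(x^{2}-f^{2})^{2}+(2xf)^{2}=(x^{2}+f^{2})^{2}$, which is obvious, proving the claim.

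The argument is essentially obstacle-free; the key trick is to \emph{refrain} from substituting the explicit rational expression for $f(x)$ until after the ellipse identity has been verified. As a bonus, the same conclusion would hold for \emph{any} real-valued function $f$ with $v\neq 0$, which here is automatic since $a>0$ prevents the simultaneous vanishing of $x$ and $f(x)$. The removable singularities at $x=\pm\sqrt{b}$ and $x=\infty$ are handled by continuous extension, giving $z(\pm\sqrt{b})=-b$ and $z(\infty)=1$, both of which satisfy the ellipse equation by direct check.
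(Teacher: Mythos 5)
Your argument is correct and every step checks out: from $v=(x,if)$ one indeed gets $v^{\ast}A_{2}v=x^{2}-bf^{2}-i(a+1)xf$ and $v^{\ast}v=x^{2}+f^{2}$, the normalised coordinates satisfy $\frac{g-c}{p}=\frac{x^{2}-f^{2}}{x^{2}+f^{2}}$ and $\frac{h}{q}=\frac{-2xf}{x^{2}+f^{2}}$ with $c=\frac{1-b}{2}$, $p=\frac{1+b}{2}$, $q=\frac{1+a}{2}$, and the ellipse equation collapses to $(x^{2}-f^{2})^{2}+(2xf)^{2}=(x^{2}+f^{2})^{2}$. The paper's proof is the same verification in spirit --- ``replace $x$ by $\func{Re}(z)$ and $y$ by $\func{Im}(z)$ in the equation of $\Omega$'' --- but it is carried out on the fully expanded degree-six rational expressions for $g$ and $h$ displayed just before the lemma, a heavy and unilluminating computation. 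Your route is genuinely better structured: by refusing to substitute $f=(x^{2}-a)/(x^{2}-b)$ you expose $z$ as the classical rational parametrisation of the circle, rescaled by the semi-axes and translated by the centre, and as a by-product you obtain the stronger statement that the Rayleigh quotient of $A_{2}$ on \emph{any} nonzero real vector of the form $(x,iy)$ lands on $\Omega$ --- the specific dynamics of $f$ is irrelevant to this lemma. Your closing remarks are also more careful than the paper: the nonvanishing of $v^{\ast}v$ does follow from $f(0)=a/b\neq 0$, and the continuous extensions $z(\pm\sqrt{b})=-b$ and $z(\infty)=1$ (which the paper only asserts later, in the lemma on the discontinuities and the vertices of $\Omega$) are verified to lie on $\Omega$ by direct substitution.
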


\begin{proof}
We obtain this result replacing in the equation of $\Omega $, $x$ by $\func{%
Re}(z)$ and $y$ by $\func{Im}(z).$
\end{proof}

\begin{proposition}
\label{proposicao do si em psi}If $S_{i}\in \Psi $ there are, at least one $%
x_{i}$ such that $z(x_{i})=S_{i}$.
\end{proposition}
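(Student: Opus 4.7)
The plan is to unwind the definitions and observe that the statement is essentially the surjectivity of $z$ onto $\Psi$, which follows directly from how $\Psi$ was defined. First I would recall that by Definition \ref{Definicao funcoes}, $\Psi$ is the image of the composition $\Xi \circ V$ applied to $\Lambda$; equivalently, it is the set
\begin{equation*}
\Psi = \left\{\, \frac{v^{\ast} A_{2} v}{v^{\ast} v} \, : \, v = (x,\, if(x)),\ v \neq 0,\ x \in \mathbb{R}\setminus\{\pm\sqrt{b}\}\,\right\}.
\end{equation*}
Hence every $S_i \in \Psi$ arises from at least one admissible vector $v_i = (x_i,\, if(x_i))$.

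Next I would invoke the explicit formula for $z(x)$ derived just before Lemma \ref{lema de z a formar omega}: for $v = (x, if(x))$, a direct computation shows
\begin{equation*}
\frac{v^{\ast} A_{2} v}{v^{\ast} v} = z(x).
\end{equation*}
Combining this with the preceding step yields $S_i = z(x_i)$ for the same $x_i$ that produces $v_i$, which is the desired conclusion.

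The only subtlety worth commenting on is the phrase \emph{at least one}: since $z$ need not be injective (e.g.\ the observation preceding the proposition exhibits pairs with $z(x+1) = z(x)$), there may be several real parameters mapping to the same $S_i$; uniqueness is not claimed and indeed would be false. Nothing needs to be checked about continuity, convexity, or the ellipse $\Omega$ here, since the proposition is purely an existence statement inherited from the definition of $\Psi$. The main (and essentially only) ``obstacle'' is to make clear that the $v$ appearing in the definition of $\Psi$ is constrained to lie in $C$, so that the parameter $x_i$ is guaranteed to be a real value in the admissible domain rather than an arbitrary element of $\mathbb{C}^2$; once this is spelled out the argument is a one-line identification.
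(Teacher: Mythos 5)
Your argument is correct for the statement as literally formulated, but it takes a genuinely different route from the paper's. You treat the proposition as a tautology: $\Psi$ is by construction the image of $\Xi\circ V$, every element of $C$ has the form $(x,\,if(x))$, and the Rayleigh quotient of such a vector is exactly $z(x)$, so membership in $\Psi$ already hands you the required $x_{i}$. The paper instead argues from the continuity of $z$ together with Lemma \ref{lema de z a formar omega} (the image of $z$ lies on the ellipse $\Omega$). The discrepancy is worth understanding: immediately after the proposition the authors conclude that ``$\Psi$ can be represented by the ellipse $\Omega$,'' i.e.\ what they really want is surjectivity of $z:\mathbb{R}\cup\{\infty\}\to\Omega$, and for that your definition-unwinding buys nothing --- one genuinely needs that $z$ is continuous on the compactified line (a circle), that its image is confined to the one-dimensional curve $\Omega$, and that it attains enough points (for instance the four vertices computed in Lemmas \ref{lema do ponto fixo e do vertice} and \ref{lema das descontinuidades e do vertice}) to force, by connectedness, the whole ellipse to be covered. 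Your closing remark about the constraint $v\in C$ is also the crux of why the proposition is neither vacuous nor false: if $v$ were allowed to range over all of $\mathbb{C}^{2}\setminus\{0\}$, as the displayed definition of $\Psi$ literally permits, then $\Psi$ would be the full elliptical disk $W(A_{2})$ and the proposition would fail, since by Lemma \ref{lema de z a formar omega} the image of $z$ lies only on the boundary curve. In short: your proof is valid and more elementary under the intended reading of $\Psi$, but it does not substitute for the continuity argument that the paper actually relies on two lines later.
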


\begin{proof}
Since $z(x)=g(x)+ih(x)$ is a continuous function in $\mathbb{C}$ and by the
lemma \ref{lema de z a formar omega} the result follows.
\end{proof}

Then we conclude that $\Psi $ can be represented by the ellipse with
equation $\Omega $.

Since $\Omega $ is constructed in the space $\mathbb{R}^{2}$ and this space
is isomorphic to $\mathbb{C}$, when we refer to an element $z$ $\in \Omega $
it can understood has a vector in $\mathbb{R}^{2}$ or a complex number in
the plane $\mathbb{C}$.

There are relations between the functions $f$ and $g$ that we can observe,
described in the following lemmas. The proofs are omitted because they
result from straight calculus.

\begin{lemma}
\label{lema do zero de f e max relat g}If $x_{0}$ is a zero of $f(x)$, then $%
g(x_{0})$ is a relative maximum of $g(x)$.
\end{lemma}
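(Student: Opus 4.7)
The plan is to show that $g(x_0) = 1$ is actually the global maximum of $g$ on the entire domain, which of course makes it a relative maximum. The idea is that zeros of $f$ correspond to a particularly simple value of the Rayleigh quotient, which geometrically lands on the rightmost point of the ellipse $\Omega$.

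First I would evaluate $z(x_0)$ directly using the definition rather than the messy explicit formula. If $f(x_0) = 0$, the associated vector is $v = (x_0, if(x_0)) = (x_0, 0)$, so $v^{\ast} = (x_0, 0)$ as well. A one-line computation gives
\begin{equation*}
A_2 v = \begin{pmatrix} x_0 \\ x_0 \end{pmatrix}, \qquad v^{\ast} A_2 v = x_0^2, \qquad v^{\ast} v = x_0^2,
\end{equation*}
hence $z(x_0) = 1$, which means $g(x_0) = 1$ and $h(x_0) = 0$. (Note $x_0 \ne \pm\sqrt{b}$ since $x_0 = \pm\sqrt{a}$ and $a > b$, so the computation is legitimate.)

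Next I would invoke Lemma \ref{lema de z a formar omega}, which tells us that the pair $(g(x), h(x))$ lies on the ellipse $\Omega$ for every admissible $x$. Reading off the equation of $\Omega$, its center is at $((1-b)/2, 0)$ with semi-axes $(1+b)/2$ and $(1+a)/2$, so the maximum value of the first coordinate attained on $\Omega$ is $(1-b)/2 + (1+b)/2 = 1$, and this maximum is attained uniquely at the point $(1, 0)$. Since $(g(x_0), h(x_0)) = (1, 0)$ sits exactly at this extremal point and $(g(x), h(x)) \in \Omega$ for every $x$ in the domain, we conclude $g(x) \leq 1 = g(x_0)$ for all such $x$. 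Thus $g(x_0)$ is the absolute maximum, and in particular a relative maximum, of $g$.

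There is no real obstacle here; the only thing one has to notice is that working from the Rayleigh-quotient definition of $z(x_0)$ (with $f(x_0) = 0$ killing the second coordinate of $v$) is vastly cleaner than substituting $x_0 = \pm\sqrt{a}$ into the explicit rational expression for $z(x)$. The geometric step at the end just uses that the rightmost point of an ellipse with horizontal major or minor axis is unique, which is immediate from its canonical equation.
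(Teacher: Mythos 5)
Your proof is correct. Note that the paper gives no argument for this lemma at all --- it states that the proofs of Lemmas \ref{lema do zero de f e max relat g} and \ref{lema das decont f e min de g} ``are omitted because they result from straight calculus,'' i.e.\ the intended route is to differentiate the explicit rational expression for $g$ and check the sign of $g'$ (or $g''$) at $x_{0}=\pm\sqrt{a}$. Your route is genuinely different and cleaner: you evaluate the Rayleigh quotient directly at $v=(x_{0},0)$ to get $z(x_{0})=1$, and then use Lemma \ref{lema de z a formar omega} (that $(g(x),h(x))$ lies \emph{on} the ellipse $\Omega$ for every $x$) to bound $g(x)\leq\frac{1-b}{2}+\frac{1+b}{2}=1$ globally. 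Both ingredients check out: with $v=(x_{0},0)$ one indeed gets $v^{\ast}A_{2}v=v^{\ast}v=x_{0}^{2}\neq 0$, and the general identity $z=\frac{x^{2}-by^{2}}{x^{2}+y^{2}}-i\frac{(a+1)xy}{x^{2}+y^{2}}$ with $y=f(x)$ satisfies the equation of $\Omega$ identically, so the bound $g\leq 1$ is legitimate. Your argument therefore yields the stronger conclusion that $g(x_{0})$ is a \emph{global} maximum, avoids any differentiation of the sixth-degree rational formula for $z(x)$, and as a bonus recovers the content of Lemma \ref{lema das descontinuidades e do vertice} (roots of $f$ land on the vertex $(1,0)$). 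The only mild caveat is that your approach leans on Lemma \ref{lema de z a formar omega}, whereas the ``straight calculus'' the authors allude to would be self-contained; since that lemma is verified by direct substitution, this is not a gap.
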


\begin{lemma}
\label{lema das decont f e min de g}If $x_{0}$ is a relative minimum of $%
f(x) $ or $x_{0}$ is a discontinuity value of $f(x)$, then $g(x_{0})$ is a
relative minimum of $g(x)$.
\end{lemma}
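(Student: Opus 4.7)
The plan is to reduce the lemma to the single identity $g(x_0)=-b$ at every such $x_0$, and then appeal to Lemma \ref{lema de z a formar omega} to conclude extremality for free. The key observation is that the ellipse $\Omega$ has horizontal semi-axis $(1+b)/2$ centered at $(1-b)/2$, so its leftmost point sits at $x$-coordinate $-b$. Consequently $g(x)\ge -b$ for every admissible $x$, and any point where equality holds is automatically a global (hence relative) minimum of $g$.

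The first step is to obtain a tractable closed form for $g$. Computing $v^{\ast }A_{2}v$ and $v^{\ast }v$ directly with $v=(x,if(x))$ according to Definition \ref{Definicao funcoes} yields, after one line of algebra,
\begin{equation*}
g(x)=\func{Re}z(x)=\frac{x^{2}-b\,f(x)^{2}}{x^{2}+f(x)^{2}}.
\end{equation*}
This is much more workable than the degree-six rational expression displayed earlier in the paper, and makes both cases of the lemma transparent.

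For the discontinuity case, the only singularities of $f$ in its domain are $x_{0}=\pm\sqrt{b}$, at which $|f(x)|\to\infty$. Dividing numerator and denominator of the expression for $g$ by $f(x)^{2}$ and passing to the limit gives $g(x_{0})=-b$; the same conclusion can be reached by substituting $x^{2}=b$ into the explicit polynomial quotient for $z(x)$ already computed in the text. For the relative-minimum case, differentiation gives $f'(x)=2x(a-b)/(x^{2}-b)^{2}$, so with $a>b>0$ the unique interior critical point is $x=0$, which is indeed a relative minimum of $f$; and plugging in yields $g(0)=-a^{2}b/a^{2}=-b$. In either case $g(x_{0})=-b$, and Step 1 of the plan plus Lemma \ref{lema de z a formar omega} then forces $g(x_{0})$ to be a relative minimum of $g$.

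The main obstacle is not conceptual—as the authors note, everything is ``straight calculus''—but rather the bookkeeping involved in producing the compact form of $g$ in terms of $f$. Once that simplification is in hand, the remaining verifications are one-line substitutions, and the ellipse constraint supplies the extremality at no further cost.
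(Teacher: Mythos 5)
Your proof is correct, and it actually supplies an argument the paper omits: the authors state only that Lemmas \ref{lema do zero de f e max relat g} and \ref{lema das decont f e min de g} ``result from straight calculus'' and give no proof. Your compact form $g(x)=\bigl(x^{2}-b\,f(x)^{2}\bigr)/\bigl(x^{2}+f(x)^{2}\bigr)$ is right --- clearing the factor $(x^{2}-b)^{2}$ from numerator and denominator recovers exactly the degree-six quotient displayed in the paper --- and the evaluations check out: $g(\pm\sqrt{b})=-b$ (either by the limit $|f|\to\infty$ or by setting $x^{2}=b$ in the polynomial form, where numerator and denominator become $-b(a-b)^{2}$ and $(a-b)^{2}$), $g(0)=-b$, and $f'(x)=2(a-b)x/(x^{2}-b)^{2}$ does make $x=0$ the unique relative minimum of $f$ when $a>b>0$. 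The genuinely different move is the appeal to Lemma \ref{lema de z a formar omega}: since $(g(x),h(x))$ lies on $\Omega$, whose horizontal extent is exactly $[-b,1]$, you get $g(x)\ge -b$ globally, so each point with $g(x_{0})=-b$ is a global (hence relative) minimum with no differentiation of the degree-six function $g$ at all. A literal ``straight calculus'' proof would instead have to locate and classify the critical points of $g'$; your route is shorter, proves the stronger statement that these are global minimizers, and dualizes immediately (the zeros of $f$ and $\infty$ give $g=1$, the rightmost point of $\Omega$, yielding Lemma \ref{lema do zero de f e max relat g} by the same one-line argument). The only point worth making explicit is that at $x_{0}=\pm\sqrt{b}$ the function $g$ is defined by the continuous (indeed smooth) extension via the polynomial quotient, which is exactly the convention the paper adopts when it remarks that $z(\pm\sqrt{b})$ is well defined.
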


There are similar relations between $h(x)$ and $f(x)$.

Follows some results relating $f(x)$ to $\Omega $.

\begin{lemma}
\label{lema do ponto fixo e do vertice}Let $f(x_{0})=\pm x_{0}$, then $\Xi
\circ V(x_{0},f(x_{0}))$ is vertex of $\Omega $.
\end{lemma}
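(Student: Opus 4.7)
The plan is to unfold the definition of $\Xi\circ V$ directly, write $z(x_0)$ in terms of $x_0$ and $f(x_0)$ without substituting the explicit form of $f$, and then impose $f(x_0)=\pm x_0$ and match the output against the list of vertices of $\Omega$. First I would identify those vertices: the ellipse $\Omega$ is axis-aligned with center $\bigl(\tfrac{1-b}{2},0\bigr)$, horizontal semi-axis $\tfrac{1+b}{2}$ and vertical semi-axis $\tfrac{1+a}{2}$, so its four vertices are $(1,0)$, $(-b,0)$, and $\bigl(\tfrac{1-b}{2},\pm\tfrac{1+a}{2}\bigr)$.

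Next I would compute $\Xi\circ V(x_0,f(x_0))$ symbolically. With $v=(x_0,\,if(x_0))^{T}$ and $x_0,f(x_0)\in\mathbb{R}$, the conjugate transpose is $v^{\ast}=(x_0,\,-if(x_0))$, so a short direct calculation gives
\begin{equation*}
v^{\ast}v = x_0^{2}+f(x_0)^{2},\qquad v^{\ast}A_{2}v = x_0^{2}-b\,f(x_0)^{2}-i(a+1)\,x_0 f(x_0).
\end{equation*}
Separating real and imaginary parts of $z(x_0)=v^{\ast}A_{2}v/v^{\ast}v$ yields
\begin{equation*}
g(x_0)=\frac{x_0^{2}-b\,f(x_0)^{2}}{x_0^{2}+f(x_0)^{2}},\qquad h(x_0)=\frac{-(a+1)\,x_0 f(x_0)}{x_0^{2}+f(x_0)^{2}}.
\end{equation*}

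The finishing move is to impose the hypothesis. Note $x_0\neq 0$: if $x_0=0$ then $f(x_0)=\pm x_0=0$ would force $a=0$, contradicting $a>0$. Hence $f(x_0)=\pm x_0$ gives $f(x_0)^{2}=x_0^{2}$, and the two displayed expressions collapse to
\begin{equation*}
g(x_0)=\frac{(1-b)x_0^{2}}{2x_0^{2}}=\frac{1-b}{2},\qquad h(x_0)=\mp\frac{(a+1)x_0^{2}}{2x_0^{2}}=\mp\frac{a+1}{2}.
\end{equation*}
Thus $\Xi\circ V(x_0,f(x_0))=\bigl(\tfrac{1-b}{2},\mp\tfrac{1+a}{2}\bigr)$, which is precisely the bottom (resp.\ top) vertex of $\Omega$ on its vertical axis, completing the proof.

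There is no real obstacle here: the argument is a direct symbolic computation followed by substitution. The only mild care required is (i) verifying $x_0\neq 0$ so that we may cancel $x_0^{2}$ in the quotient, and (ii) keeping track of the sign so that both cases $f(x_0)=x_0$ and $f(x_0)=-x_0$ are shown to land on a vertex (they land on opposite vertices of the vertical axis of $\Omega$). One may also observe that membership in $\Omega$ is already guaranteed by Lemma \ref{lema de z a formar omega}, so the computation only needs to confirm which point on $\Omega$ is attained.
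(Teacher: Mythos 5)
Your proof is correct and follows essentially the same route as the paper: a direct evaluation of the Rayleigh quotient $v^{\ast}A_{2}v/v^{\ast}v$ at $v=(x_{0},if(x_{0}))$, followed by the substitution $f(x_{0})=\pm x_{0}$, landing on the two vertices $\bigl(\tfrac{1-b}{2},\mp\tfrac{1+a}{2}\bigr)$ of $\Omega$. Your added check that $x_{0}\neq 0$ (so the cancellation of $x_{0}^{2}$ is legitimate) is a small point of rigor the paper leaves implicit, but the argument is otherwise identical.
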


\begin{proof}
If $f(x_{0})=x_{0}$, by $V$ we have $\left( x_{0}\text{, }ix_{0}\right) $.
So
\begin{equation*}
\Xi (\left( x_{0}\text{, }ix_{0}\right) )=\frac{\left(
\begin{array}{cc}
x_{0} & -ix_{0}%
\end{array}%
\right) \left(
\begin{array}{cc}
1 & -a \\
1 & -b%
\end{array}%
\right) \left(
\begin{array}{c}
x_{0} \\
ix_{0}%
\end{array}%
\right) }{\left(
\begin{array}{cc}
x_{0} & -ix_{0}%
\end{array}%
\right) \left(
\begin{array}{c}
x_{0} \\
ix_{0}%
\end{array}%
\right) }=\frac{1-b}{2}-i\frac{1+a}{2}
\end{equation*}

And if we look at the equation of $\Omega $, we see that $\left( \frac{1-b}{2%
},-\frac{1+a}{2}\right) $ is a vertex of $\Omega $.

If $f(x_{0})=-x_{0}$ we obtain another vertex of $\Omega $ in a similar way,
which is $\left( \frac{1-b}{2},\frac{1+a}{2}\right) $.
\end{proof}

\begin{lemma}
\label{lema das descontinuidades e do vertice}The discontinuities of $f(x)$
and the values where $f(x)$ has a minimum are transformed by $\Xi \circ
V\circ \theta $ in the vertex $(-b,0)$ of $\Omega $, and the roots of $f(x)$
and the $\infty $ are transformed by $\Xi \circ V\circ \theta $ in the
vertex $(1,0)$ of $\Omega $.
\end{lemma}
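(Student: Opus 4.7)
The strategy is to compute $(\Xi\circ V\circ\theta)(x_{0})$ explicitly at each of the four kinds of special points and match the results with the real-axis vertices of $\Omega$. First I would identify those vertices: setting $y=0$ in the equation of $\Omega$ and solving the resulting quadratic in $x$ gives $x=1$ and $x=-b$, so $(1,0)$ and $(-b,0)$ are indeed the horizontal vertices of $\Omega$.

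Next I would treat the zeros of $f$. Since $a>0$, the zeros are $x_{0}=\pm\sqrt{a}$, and at such a point $V(x_{0},f(x_{0}))=(x_{0},0)$. A direct substitution in the Rayleigh quotient
\[
\Xi(v)=\frac{v^{\ast}A_{2}v}{v^{\ast}v}
\]
with $v=(x_{0},0)^{T}$ yields $\Xi(v)=x_{0}^{2}/x_{0}^{2}=1$, so the zeros map to $(1,0)$. For the behavior at infinity I would use the explicit rational expression for $z(x)$ already derived: both numerator and denominator have leading term $x^{6}$ in the real part while the imaginary part is of lower order in $x$, so $\lim_{x\to\infty}z(x)=1$, which is the same vertex.

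Then I would handle the discontinuities and the minimum. For $x_{0}=\pm\sqrt{b}$ the value $f(x_{0})$ is not finite, but $z(\pm\sqrt{b})$ is well defined by the continuity remark made just before the statement. A clean way to obtain its value is to use homogeneity of the Rayleigh quotient and rescale the vector $(x_{0},if(x_{0}))$: as $f(x)\to\infty$ the direction becomes $(0,1)^{T}$, and a short calculation gives $\Xi((0,1)^{T})=-b$. For the minimum, differentiating $f$ yields $f'(x)=2x(a-b)/(x^{2}-b)^{2}$, so under $a>b$ the unique critical point is $x_{0}=0$ with $f(0)=a/b$; substituting $v=(0,ia/b)^{T}$ in the Rayleigh quotient also gives $\Xi(v)=-b$, i.e.\ the vertex $(-b,0)$.

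The main obstacle is the rigorous handling of the two infinite cases (the discontinuities, where $f(x_{0})$ is not defined, and $x=\infty$), because $V$ and $\theta$ were set up only for $x\in\mathbb{R}\setminus\{\pm\sqrt{b}\}$. I would bridge this either by invoking the continuity of $z(x)$ stated earlier, which guarantees existence of $z(\pm\sqrt{b})$ and $z(\infty)$ and lets us compute the values by taking limits in the explicit rational expression, or by the homogeneity argument on $\Xi$ that replaces $(x_{0},if(x_{0}))$ by the rescaled direction $(0,1)^{T}$ without altering the value of the quotient. Once that technicality is settled, the four computations above finish the proof.
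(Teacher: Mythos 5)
Your proposal is correct and follows essentially the same route as the paper: the paper's proof simply invokes the continuity of $z(x)=v^{\ast}A_{2}v/v^{\ast}v$ to assert $z(\pm\sqrt{b})=-b$ and $z(\infty)=1$, which is exactly the limit/homogeneity argument you make explicit. You go further by actually carrying out the Rayleigh-quotient evaluations at the zeros, at $x=0$, and in the limiting directions $(1,0)$ and $(0,1)$, and by verifying that $1$ and $-b$ are the real-axis vertices of $\Omega$ — details the paper leaves implicit — but the underlying argument is the same.
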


\begin{proof}
Since $z(x)=g(x)+ih(x)=v^{\ast }A_{2}v/v^{\ast }v$, $v=(x,if(x))$ is a
continuous function in $\mathbb{C}$, we have $z(\pm \sqrt{b})=-b$ and $%
z(\infty )=1$.
\end{proof}

\section{Partitions of $\Omega $}

Let $x_{1}$, $x_{2}$, $x_{5}$, $x_{6}$, $x_{7}$, $x_{8}$, $x_{9}$, $x_{12}$,
$x_{13}$ be the solutions of $g^{\prime }(x)=0$. By lemma \ref{lema das
decont f e min de g}, and lemma \ref{lema do zero de f e max relat g}, and
considering the order of real axis, we will have $x_{2}$ and $x_{12}$ as
zeros of $f(x)$; $x_{5}$ and $x_{9}$ as the discontinuities of $\ f(x)$ and $%
x_{7}=0$. All this values have image from $z(x)$, lemma \ref{lema de z a
formar omega}, including the infinity, being related by $%
z(x_{2})=z(x_{12})=z(\infty )$ and equal to vertex $(1,0)$ in $\Omega $, see
lemma \ref{lema das descontinuidades e do vertice}, $z(x_{5})=z(x_{9})=z(0)$
and equal to vertex $(-b,0)$ in $\Omega $. Related to the real axis, $%
z(x_{1})$ is symmetric to $z(x_{13})$ and $z(x_{6})$ is symmetric to $%
z(x_{8})$ in $\Omega $. Where are the missing $x_{3}$, $x_{4}$, $x_{10}$, $%
x_{11}$ ? They will be the values such that $z(x_{1})=z(x_{11})$, $%
z(x_{6})=z(x_{10})$, $z(x_{8})=z(x_{4})$ and $z(x_{13})=z(x_{3})$.

Using this special values $x_{i}$, $i=1,..,13$, with order $x_{i}<x_{i+1}$,
we can define a partition function $pa$, as
\begin{equation*}
pa(x)\underset{x\in \mathbb{R}}{=}\left\{
\begin{array}{ll}
I_{1}\text{,} & if\text{ }x<x_{1} \\
I_{i}\text{,} & if\text{ }x_{i-1}<x<x_{i}\text{ with }2\leq i\leq 13 \\
I_{14}\text{,} & if\text{ }x>x_{13}%
\end{array}%
\right. .
\end{equation*}

Now we will create partitions in $\Omega $ using the images $z(x_{i})$, $%
i=1,..,13$ in $\Omega $. Here, we ask attention for one particular aspect of
$z$, see proposition \ref{proposicao do si em psi}. Some intervals $I_{i}$
will be transformed in the same arc of $\Omega $. The only thing that will
distinguish them is the orientation and the origin of its end points.

\begin{definition}
Let $S_{i}=z(x_{i})$, we define $arc(S_{i},S_{i+1})$ as the arc of $\Omega $
starting at $S_{i}$ and ending at $S_{i+1}$, with counterclockwise
orientation.
\end{definition}

We define $pa_{\Omega }$, a partition function, as:
\begin{equation*}
pa_{_{\Omega }}(w)\underset{w\in \mathbb{C}}{=}\left\{
\begin{array}{ll}
J_{1}\text{,} & if\text{ }w\in \text{ }arc(z(\infty )\text{, }z(x_{1}))\text{
} \\
J_{i}\text{,} & if\text{ }w\in \text{ }arc(z(x_{i-1})\text{, }z(x_{i}))\text{
with }2\leq i\leq 13 \\
J_{14}\text{,} & if\text{ }w\in \text{ }arc(z(x_{14})\text{, }z(\infty ))%
\end{array}%
\right. .
\end{equation*}

The functions $pa(x)$ and $pa_{\Omega }(w)$ are related by
\begin{equation*}
z(I_{i})=\left\{
\begin{array}{ll}
J_{1} & if\text{ }i=1 \\
-J_{i} & if\text{ }2\leq i\leq 6 \\
J_{i} & if\text{ }i=7\text{ or }i=8 \\
-J_{i} & if\text{ }9\leq i\leq 13 \\
J_{14} & if\text{ }i=14%
\end{array}%
\right. \text{,}
\end{equation*}%
thus we can build a matrix $T_{14}$ of the transformation $pa_{\Omega
}(z(pa))$,%
\begin{equation*}
T_{14}=\left[
\begin{array}{cc}
N_{7} & \mathbb{O}_{7} \\
\mathbb{O}_{7} & N_{7}%
\end{array}%
\right] \text{,}
\end{equation*}%
with
\begin{equation*}
N_{7}=\left[
\begin{array}{ccccccc}
1 & 0 & 0 & 0 & 0 & 0 & 0 \\
0 & -1 & 0 & 0 & 0 & 0 & 0 \\
0 & 0 & -1 & 0 & 0 & 0 & 0 \\
0 & 0 & 0 & -1 & 0 & 0 & 0 \\
0 & 0 & 0 & 0 & -1 & 0 & 0 \\
0 & 0 & 0 & 0 & 0 & -1 & 0 \\
0 & 0 & 0 & 0 & 0 & 0 & 1%
\end{array}%
\right] .
\end{equation*}%
It is easy to see that $T_{14}.T_{14}=I_{14}$, $det(T_{14})=1$, and it is an
involutary matrix.

\section{Dynamics of $f(x)$}

Now we have a new tool to study the dynamics of $f$ using a symbolic space.
Using $\Omega $ to study the behavior of $f$ we will have the same
advantages that we would have when studying the behavior of second degree
polynomials functions in the unit circle.

If we define a symbolic space using the partitions created by the function $%
pa$ in the real axis we will have the problem of dealing with the
discontinuities of the function $f$ and the infinity itself. So, profiting
that $z$ is a continuous complex function in $\mathbb{C\cup \{\infty \}}$
this problem will vanish.

We can build two distinct symbolic spaces. The first will be the classical
association between the intervals produced by $pa$ in the real axis, see
Milnor\cite{Milnor 1993} for further reference, using the domain of the
function $f$, and considering an alphabet $\mathcal{A}$ with designations $%
I_{i}$ for each interval, we will have a symbolic space $\Sigma _{c}=$ $%
\mathcal{A}^{\mathbb{N}}$. The second will be constructed as we consider the
alphabet $\mathcal{B}=\{J_{1},...,J_{14}\}$, and the set $\Sigma =\mathcal{B}%
^{\mathbb{N}}$ of symbolic sequences on the elements of $\mathcal{B}$,
introducing the map $spa:\mathbb{R\cup \{\infty \}}\longrightarrow \mathcal{B%
}$.

\begin{conjecture}
The symbolic dynamics of $f$ does not change if we use $\Sigma $ instead of $%
\Sigma _{c}$.
\end{conjecture}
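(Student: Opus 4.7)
The plan is to promote the correspondence captured by the matrix $T_{14}$ to a topological conjugacy of the underlying subshifts. Define the label bijection $\tau:\mathcal{A}\to\mathcal{B}$ by $\tau(I_{i})=J_{i}$, and its coordinate-wise extension $\tau_{\ast}:\Sigma_{c}\to\Sigma$, which trivially commutes with the shift. For a starting point $x$ whose forward $f$-orbit avoids $\{\pm\sqrt{b},\infty\}$, set $\iota_{c}(x)=(pa(f^{n}(x)))_{n\geq 0}$ and $\iota_{\Omega}(x)=(pa_{\Omega}(z(f^{n}(x))))_{n\geq 0}$; the goal is to prove the coordinate-wise identity $\iota_{\Omega}=\tau_{\ast}\circ\iota_{c}$.

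The whole content lies in the single pointwise assertion $pa_{\Omega}(z(y))=\tau(pa(y))$, which is essentially what the displayed relation $z(I_{i})=\pm J_{i}$ encodes. To turn this into a proof I would argue that (i) the endpoints of $I_{i}$ are sent to endpoints of $J_{i}$, handled by Lemmas \ref{lema do zero de f e max relat g}, \ref{lema das decont f e min de g} and \ref{lema das descontinuidades e do vertice}, which pin down the images of the critical, zero, discontinuity and infinite points at the prescribed vertices of $\Omega$; and (ii) the interior of $I_{i}$ is sent by $z$ inside the corresponding open arc $J_{i}$, which follows from the continuity of $z$ (established after Proposition \ref{criacao de psi em wa2}) together with Lemma \ref{lema de z a formar omega}: a continuous curve on $\Omega$ joining consecutive partition points without crossing any of them must remain in the single open arc between them. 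The sign in $T_{14}$ merely records the orientation in which $z$ traverses the arc and is irrelevant to the label $J_{i}$.

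Once the pointwise identity is secured, the dynamical compatibility is automatic. Being coordinate-wise, $\tau_{\ast}$ commutes with the shift, so it carries $\iota_{c}(\mathbb{R}\setminus E)$ bijectively onto $\iota_{\Omega}(\mathbb{R}\setminus E)$, where $E$ is the countable set of points whose orbit eventually meets $\{\pm\sqrt{b},\infty\}$. All standard symbolic invariants — periodic points, kneading sequences, topological entropy — then transfer through $\tau_{\ast}$, and the involutory structure of $T_{14}$ guarantees that the inverse relabelling $\tau_{\ast}^{-1}$ is equally well-behaved.

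The real difficulty is the treatment of the exceptional set $E$. Because $z$ extends continuously to $\mathbb{C}\cup\{\infty\}$ (Lemma \ref{lema das descontinuidades e do vertice}), the ellipse coding is well-defined on every orbit, including those that strike $\pm\sqrt{b}$ or pass through $\infty$, whereas the classical coding on $\Sigma_{c}$ either omits such orbits or requires an \emph{ad hoc} convention. I would close the argument by showing that $\Sigma$ is obtained from $\tau_{\ast}(\Sigma_{c})$ by adjoining only the itineraries produced by orbits through $E$, that these arise as limits of ordinary itineraries, and therefore that they neither enlarge the non-wandering set nor alter the topological entropy. Making this closure step rigorous — possibly under mild additional conditions on $a$ and $b$ to rule out recurrent captures at the vertices $(1,0)$ and $(-b,0)$ — is, in my view, the step where the conjecture genuinely needs work; everything else is essentially bookkeeping around $T_{14}$.
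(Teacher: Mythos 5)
First, a point of calibration: the statement you are proving is labelled a \emph{conjecture}, and the authors state explicitly that they are ``still working in a suitable proof of this result,'' so there is no argument in the paper to compare yours against. Your proposal therefore has to stand on its own, and as written it has a genuine gap that sits earlier than the place you locate it.

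Your whole reduction rests on the pointwise identity $pa_{\Omega }(z(y))=\tau (pa(y))$ with $\tau (I_{i})=J_{i}$ a bijection of alphabets. That identity cannot hold with a single-valued $pa_{\Omega }$, because $z$ is far from injective and the arcs $J_{i}$ are not pairwise disjoint subsets of $\Omega $. The paper records this explicitly: $z(x_{2})=z(x_{12})=z(\infty )=(1,0)$, $z(x_{5})=z(x_{9})=z(0)=(-b,0)$, $z(x_{1})=z(x_{11})$, $z(x_{3})=z(x_{13})$, $z(x_{4})=z(x_{8})$, $z(x_{6})=z(x_{10})$, and ``some intervals $I_{i}$ will be transformed in the same arc of $\Omega $,'' distinguishable ``only by the orientation and the origin of its end points.'' So, for instance, $arc(z(x_{2}),z(x_{3}))$ and $arc(z(x_{12}),z(x_{13}))$ are the same point set, and a point $w=z(f^{n}(x))$ on the ellipse does not determine a unique symbol of $\mathcal{B}$. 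If $z(y)=z(y^{\prime })$ with $pa(y)\neq pa(y^{\prime })$ — which happens on a set of full measure, not just on your exceptional set $E$ — your identity together with the bijectivity of $\tau $ would force $pa(y)=pa(y^{\prime })$, a contradiction. You must therefore either accept that $\iota _{\Omega }$ is ill-defined, or define $spa$ as $\tau \circ pa$ by fiat, in which case the conjecture becomes a tautology that says nothing about the ellipse. The genuine content of the conjecture is precisely whether the many-to-one collapse effected by $z$ (identifying the two zeros of $f$ with each other and with $\infty $ at the vertex $(1,0)$, the two poles and the critical point $0$ at $(-b,0)$, and pairing the remaining $x_{i}$) loses dynamical information; the orientation data recorded by the signs in $T_{14}$, which you dismiss as ``irrelevant to the label,'' is exactly the extra bookkeeping the authors hope will let one disambiguate the overlapping arcs and recover $\Sigma _{c}$ from $\Sigma $. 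Your step (ii) (monotone traversal of a single arc between consecutive $S_{i}$, via Lemmas \ref{lema de z a formar omega} and \ref{lema das decont f e min de g}) is sound, and your concerns about orbits through $\pm \sqrt{b}$ and $\infty $ are legitimate, but the central difficulty is the non-injectivity of the arc coding on all of $\mathbb{R}$, and your plan assumes it away.
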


Both spaces are connected by the transformation matrix $T_{14}$ and doing
some calculus in matrix algebra, since this matrix is an involutary matrix,
we could get the result. All computations in our work points in that
direction. But we are still working in a suitable proof of this result.
Moreover, $\Sigma $ will work as an extension of $\Sigma _{c}$.

It means that we can identify the periodic orbits in the same values of $a$
and $b$ as we use both spaces\ $\Sigma $ and $\Sigma _{c}$. For example for
the values $a=4.01$ and $b=2.5$ the critical orbit of $f$ is periodic in
both spaces, such as all the others values of periodicity found in our
research. But they are many new sequences in $\Sigma $ that needs more work
to full understood slightly changes caused by the obliteration of $\infty $.
We are in the way.

This work, when finished, will imply a sequential work in the kneading
theory and further study of the entropy of rational real maps.

\end{document}